\theoremstyle{plain}
\newtheorem{theorem}{Theorem}
\newtheorem{lemma}[theorem]{Lemma}
\newtheorem{proposition}[theorem]{Proposition}
\theoremstyle{definition}
\newtheorem{remark}[theorem]{Remark}
\newtheorem{problem}[theorem]{Problem}
\newtheorem{definition}[theorem]{Definition}
\newcommand{\acts}{\curvearrowright}
\newcommand{\sF}{{\mathscr F}}
\newcommand{\G}{\Gamma}
\newcommand{\rk}{\mathrm{rk}}
\date{\today}
\title[Tits buildings and groups with the Haagerup property]{An exotic group with the Haagerup property} 
\author{Sylvain Barr\'e}
\address{\hskip-\parindent
Sylvain Barr\'e, Universit\'e de Bretagne Sud, Universit\'e Europ\'eenne de Bretagne, France}
\email{sylvain.barre@univ.ubs.fr}
\author{Mika\"el Pichot}
\address{\hskip-\parindent
Mika\"el Pichot, Dept. of Mathematics \& Statistics, McGill University, Montr\'eal, Quebec, Canada H3A 2K6}
\email{pichot@math.mcgill.ca}
\begin{document}

\begin{abstract}
We prove the Haagerup property for an infinite discrete group constructed using surgery on a Euclidean Tits building of type $\tilde A_2$. 
\end{abstract}

\maketitle

\bigskip

The group $\G_{\bowtie}$ studied in this paper is the fundamental group of a 2-dimensional cell complex $V_{\bowtie}$ defined by gluing  13 faces along their boundaries respecting orientation and labelling as follows:

\begin{figure}[h]
\centering{\includegraphics[width=10cm]{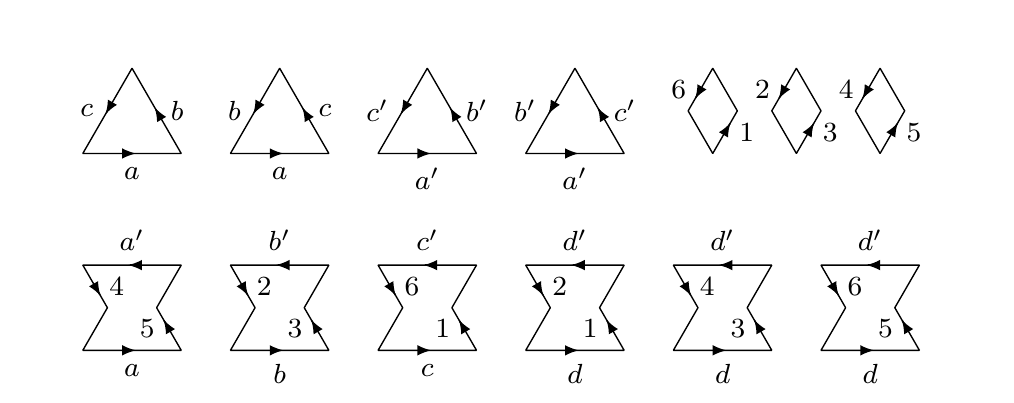}}
\caption{}\label{bowtie}
\end{figure}

\noindent Each face is Euclidean (flat) and the universal cover $X_{\bowtie}$ of $V_{\bowtie}$ is a CAT(0) space  in the induced metric \cite[Lemma 62]{rd}. The metric on the faces is given by their embedding in the Euclidean plane described on Figure \ref{bowtie}. On the first line, the first four faces are equilateral triangles and the next three are rhombi. We call the six lower faces \emph{bow ties}.

We refer to \cite{rd} for information on the group $\G_{\bowtie}$ (see in particular Definition 61 therein). The construction of  $V_{\bowtie}$  proceeds in several steps  using a surgery operation  on a Euclidean (exotic) Tits buildings of type $\tilde A_2$ discovered in \cite[Section 3]{toulouse}. In order to create the above presentation of $V_{\bowtie}$, one starts by cutting the  building into two parts, chooses  (appropriately) one of the two components, and glues it to a duplicate of itself along their common boundary.

In the present paper we prove that:

\begin{theorem}\label{th1}
The group $\G_{\bowtie}$ has the Haagerup property.
\end{theorem}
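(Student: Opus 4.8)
The plan is to equip $X_{\bowtie}$ with a $\G_{\bowtie}$-invariant structure of a \emph{space with walls} whose wall pseudo-metric is proper, and then invoke the standard principle that a group acting properly on such a space has the Haagerup property. Concretely, once $X_{\bowtie}$ carries a family $\cW$ of walls (each wall cutting $X_{\bowtie}$ into two half-spaces), the wall-counting function
\[
\psi(g) = \#\{\, w \in \cW : w \text{ separates } x_0 \text{ from } g\cdot x_0 \,\}
\]
is conditionally negative definite on $\G_{\bowtie}$ (Haglund--Paulin; Cherix--Martin--Valette), and if $\psi$ is moreover proper then $\G_{\bowtie}$ has the Haagerup property. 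Since $\G_{\bowtie}=\pi_1(V_{\bowtie})$ acts properly cocompactly on the CAT(0) space $X_{\bowtie}$, properness of $\psi$ reduces to showing that the number of walls crossed by a CAT(0) geodesic grows at least linearly in its length.

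First I would construct the walls, and here the surgery is the essential ingredient. The doubling operation — cutting the building, selecting a component, and gluing it to a duplicate of itself along the common boundary — produces a distinguished separating locus; its $\G_{\bowtie}$-translates in $X_{\bowtie}$ form a first family of walls, while inside each copy of the chosen component one inherits the (branching, tree-like) walls of the ambient $\tilde A_2$ building. Using the explicit Euclidean geometry of the $13$ faces, I would check that each such wall is a convex subset of $X_{\bowtie}$ and that its complement has exactly two connected components, so that it genuinely defines a pair of half-spaces.

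With walls in hand, convexity in the CAT(0) space guarantees that the geodesic between any two points crosses only finitely many walls, and the separation property makes $\psi$ symmetric and subadditive. Properness is where the flat model is decisive: inside a Euclidean piece the walls restrict to the three parallel families of lines of the $\tilde A_2$ tiling, so a geodesic segment of length $\ell$ meets on the order of $\ell$ walls; a cocompactness argument then propagates this lower bound across all of $X_{\bowtie}$, giving $\psi(g)\to\infty$ as $g\to\infty$ in $\G_{\bowtie}$ and hence the Haagerup property.

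The hard part will be the very first step: producing walls that \emph{actually separate} $X_{\bowtie}$ into two halves while still yielding a proper counting function. In the unmodified building this fails — cocompact lattices of thick $\tilde A_2$ buildings satisfy Kazhdan's property (T), which is incompatible with any proper wall structure — so the argument must isolate precisely how the cut-and-double breaks this rigidity and manufactures the extra separations. Verifying the two-half-space property across the gluing loci and along the branching of the bow-tie faces, and ruling out walls that "wrap around" and would destroy properness, is the delicate geometric heart of the proof.
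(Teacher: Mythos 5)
Your overall strategy coincides with the paper's: build a $\G_{\bowtie}$-invariant space with walls on $X_{\bowtie}$, prove each wall separates into exactly two components, prove the wall metric is proper, and conclude via Haglund--Paulin and the affine representation. But the proposal stops short of the actual content at each of the three stages, and where it does commit to specifics, several would fail.

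First, the wall construction. You propose to take the translates of the gluing locus together with walls ``inherited'' from the ambient $\tilde A_2$ building. The building has no proper separating wall structure (as you yourself note, via property (T)), so there is nothing to inherit; the paper instead defines five combinatorial families of walls (types $a,b,c,d$ and transverse) by prescribing their footprints on each of the 13 faces, and the crux is that the walls of type $a,b,c$ pass through vertices whose links are the incidence graph of the Fano plane --- a Ramanujan graph --- and one must exhibit an explicit halving of that graph into two large pieces to get the two-component separation. That halving is the real mechanism by which the surgery ``breaks'' rigidity, and it is absent from your sketch. Second, your claim that each wall is a convex subset of $X_{\bowtie}$ is false for the main walls: they refract at bow ties and zigzag, and the paper must instead prove they are simply connected via a disk-diagram argument and control the refraction points. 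Third, the properness step: a CAT(0) geodesic in $X_{\bowtie}$ need not lie in a flat of equilateral triangles, so ``cocompactness propagates the linear lower bound'' is not an argument. The paper splits into two cases according to whether the number of bow ties met by $[x,sx]$ tends to infinity --- if so, the (totally geodesic) transverse walls already give properness; if not, a long subsegment lies in a flat plane of triangles and one shows a wall of type $a,b,c$ meets such a plane in a single straight line. Without the explicit wall definitions, the Fano-plane halving, and this case analysis, the proposal is a correct roadmap but not a proof.
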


The proof of this result is based  on the construction of a wall space in $X_{\bowtie}$, in the sense of Haglung and Paulin \cite{HP}, which is proper; this implies that the group $\G_{\bowtie}$ has the Haagerup property  (using the usual affine representation of Haglung, Paulin and Valette). In fact we will see, more precisely, that the group $\G_{\bowtie}$ acts properly on a CAT(0) cube complex of dimension 4.

We construct the walls in Section \ref{S-walls}, prove that they are separating in Section \ref{S-Sep}, and prove properness of the wall space in Section \ref{S-prop}. 
For more details on the Haagerup property (which is also known as Gromov's a-T-menability), we refer to \cite{Valette-book}.

\begin{remark}
Although not exactly a Tits building, the complex $X_{\bowtie}$ has links isomorphic to spherical buildings (of type $A_2$) at infinitely many of its vertices. The surgery procedure mentioned above avoids the vertices of the original Euclidean building. In fact,  
\begin{itemize}
\item[-] the link at every vertex of every equilateral triangle in $X_{\bowtie}$, 
\item[-] the link at every acute vertex of every rhombus in $X_{\bowtie}$, and 
\item[-] the link at every acute vertex of every bow tie in $X_{\bowtie}$,
\end{itemize}
is a  spherical building, namely, the incidence graph of the Fano plane.  (The fact that the space $V_{\bowtie}$ is  locally CAT(0) is proved in Lemma 62 of \cite{rd}, which describes the link at the other vertices as well.) The group $\G_{\bowtie}$ was an important example of a group ``of intermediate rank" in \cite{rd}. We recall that (by a result of Tits, see e.g.\ \cite{Tits81}) every simply connected simplicial complex whose links at {\bf every} vertex are spherical buildings of type $A_2$ is itself a Euclidean building of type $\tilde A_2$. Of course, it is well known that a group which acts properly and uniformly on such a complex has the property T of Kazhdan (see \cite{BHV}).
\end{remark}

\bigskip

\centerline{---}

\medskip

Let us  give additional motivation for Theorem \ref{th1} in relation with \cite{rd}. The paper \cite{rd}  provides a new point of view on the notion of rank for groups and spaces.  The rank of a group is a useful invariant (for example, it is useful to understand the rigidity properties of the group)  but it is not clear how to it can be defined  beyond the realm of Lie groups and their lattices. The idea in \cite{rd} is to ``interpolate" between situations of rank 1 and situations of  rank $\geq 2$ (the ``higher rank" lattices, especially in the irreducible case), yielding intermediate values for the ``rank" (in a sense to be made precise) of the resulting groups and spaces. For example, we introduced there groups ``of rank $7\over 4$", as well as the group $\G_{\bowtie}$ acting on $X_{\bowtie}$. We were especially interested in constructing groups whose ``rank"  is close (but unequal) to 2.  The question of rigidity, for example, (e.g.\ property T but also Mostow--Margulis rigidity, QI rigidity, geometric superrigidity via harmonic maps, etc.) is  interesting in this intermediate rank environment. 

A quantitative  notion of rank for cocompact actions  $\G\acts X$ of a group $\G$ on singular CAT(0) 2-complexes $X$, denoted $\rk(X,\G)$ and called the local rank, was made explicit later in \cite{chambers} (see Definition 4.5 therein). Using this definition, one finds that the local rank of the group $\G_{\bowtie}$ at $X_{\bowtie}$ equals:
\[
\rk(X_{\bowtie},\G_{\bowtie})={31\over 16}\simeq 1.9375.
\]

 In \cite{random}, we constructed (random) groups and spaces whose local rank $\rk(X,\G)$ is as close to 2 as desired.   These groups, generically, have the property T of Kazhdan (and they act on CAT(0) complexes built out of equilateral triangles). The fact that the local rank approaches 2 while the local geometry is kept uniformly bounded forces the apparition of (many) links isomorphic to spherical buildings in the underlying complex $X$. 
Theorem \ref{th1} (and its analogous result for the groups in $\sF_{\bowtie}$ described in  Section \ref{add-rem}) is one step towards understanding the properties of this type of groups of ``high rank".  It is a priori rather surprising that a group like $\G_{\bowtie}$ enjoys the Haagerup property. 
 
 We haven't been able to resolve the following problem (which is interesting both when the local geometry is uniformly bounded,  and in the general case):

\begin{problem}[High rank versus the Haagerup property]
Is there a real number $r_{\mathrm{crit}}<2$ (either absolute or depending only on a uniform bound on the local geometry, that is, on a uniform upper bound on the number of triangles containing any given point) such that if $(X,\G)$ is a couple for which
\[
\rk(X,\G)>r_{\mathrm{crit}}
\]
where $\G$ acts freely isometrically on a singular CAT(0) triangle complex $X$ with compact quotient, then the group $\G$ does not have the Haagerup property?
\end{problem}

The  underlying idea conveyed by the problem  is that the presence of a high proportion of ``good" expanders (say, \emph{Ramanujan graphs}) in the local geometry of $X$ might prevent the group $\G$ to have the Haagerup property, or at least,   in principle, the space $X$ to carry a \emph{separating} proper wall space.  More details on this idea are provided in Section \ref{S-Sep} and Remark \ref{Rem - ramanujan} below.   It would be interesting to study the above problem for other types of Euclidean buildings (insofar as the equality $\rk(X,\G)=2$ implies that $\G$ has Kazhdan's property T).

We note that $r_{\mathrm{crit}}\geq {31\over 16}$ by Theorem \ref{th1}. 
\bigskip

\emph{Acknowledgments.} This work is partially supported by NSERC and FQRNT.

\section{The Walls}\label{S-walls}

The complex $X_{\bowtie}$ has five types of walls:  walls of type $x$ where $x=a,b,c,d$, and transverse walls. In this section we describe these walls.
They are defined by the footprints they leave on faces.

\begin{definition}\label{D - Wall a}
A \emph{wall of type $a$} is a connected component of the subcomplex of $X_{\bowtie}$ defined by taking the universal cover of the set of (red)  edges and rhombi of $V_{\bowtie}$ shown on Fig. \ref{fig - walls of type a}.
\end{definition}

\begin{figure}[h]
\centering{\includegraphics[width=10cm]{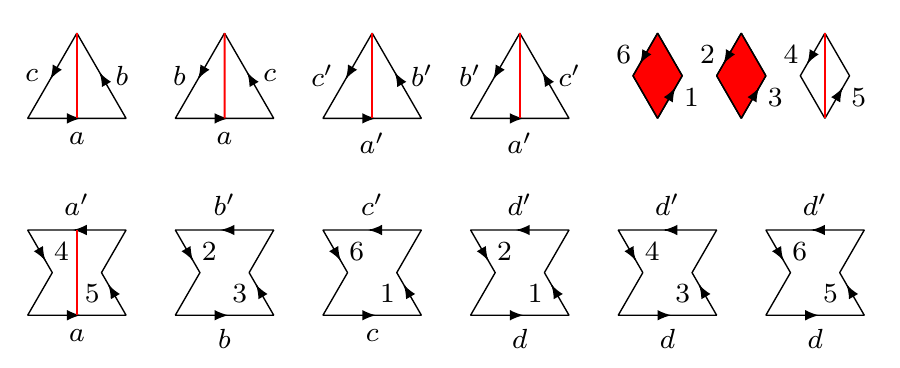}}
\caption{Walls of type $a$}\label{fig - walls of type a}
\end{figure}

 We define similarly the walls of type $b$ and the walls of type $c$ by permuting appropriately the labels on faces.
To illustrate, Fig. \ref{fig - walls of type b} below describes the walls of type $b$.

\begin{figure}[h]
\centering{\includegraphics[width=10cm]{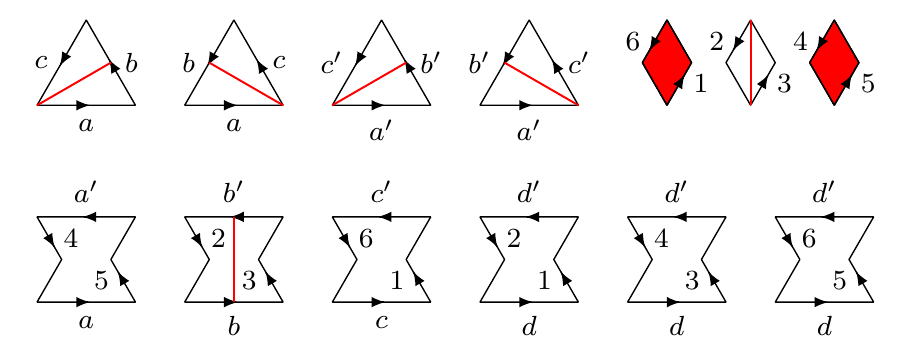}}
\caption{Walls of type $b$}\label{fig - walls of type b}
\end{figure}

\begin{remark}\label{Rem - ramanujan}
There are several difficulties with these walls. The major difficulty is that they cross spherical buildings at vertices where, in particular,  the first eigenvalue is greater than $1/2$  (see Section 5 in \cite{BHV}, in particular Section 5.5 to 5.7 for  the role of the condition  ``the first eigenvalue is greater than $1/2$"). In fact, the link at {\bf every} vertex in such a wall  is a spherical building of type $A_2$, namely, the incidence graph of the Fano plane. The spectral properties of this graph make it a ``good" expander --- it is a Ramanujan graph --- and yet, it will have  along the proof to be cut    \emph{into two large subsets without discarding too many edges}. Another difficulty is that the walls  `refract' (see below) and bounce along CAT(0) geodesics in $X_{\bowtie}$ (see Section \ref{S-prop}).  
\end{remark}

\begin{definition}
A \emph{wall of type $d$} is a connected component of the graph defined by taking the universal cover of the  set of (red)  edges of $V_{\bowtie}$ described on Fig. \ref{Fig - walls of type d}. They enter no face with shapes or labels  other than the ones in Figure \ref{Fig - walls of type d}. 
\end{definition}

\begin{figure}[h]
\centering{\includegraphics[width=5.5cm]{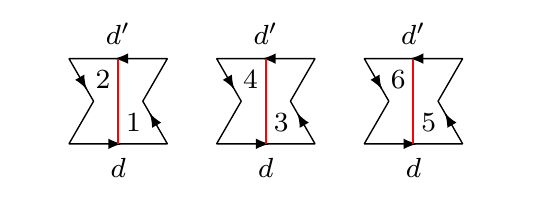}}
\caption{Walls of type $d$}\label{Fig - walls of type d}
\end{figure}

\begin{definition}
A \emph{transverse wall} in $X_{\bowtie}$ is a connected component of the graph defined by taking the universal cover of the set of (red)  edges of $V_{\bowtie}$ shown on Fig. \ref{Fig - transverse walls}. Transverse walls enter no triangular face.
\end{definition}

\begin{figure}[h] 
\centering{\includegraphics[width=10cm]{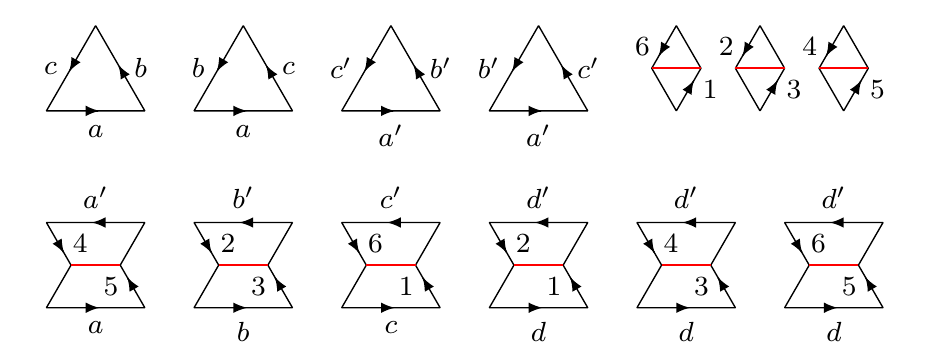}}
\caption{Transverse walls}\label{Fig - transverse walls}
\end{figure}

In the next section, we define and study the wall space of all connected components of complements of the above geometric walls in $X_{\bowtie}$.

\begin{lemma}\label{L - walls - simply connected}
The walls are simply connected.
\end{lemma}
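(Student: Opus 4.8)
The plan is to deduce simple connectivity from local convexity, exploiting that $X_{\bowtie}$ is CAT(0) and hence contractible. Concretely, I would show that each wall $\tilde W$ is a \emph{convex} subcomplex of $X_{\bowtie}$. Convexity is a local-to-global condition in a Hadamard space: a complete, connected, locally convex subspace of a CAT(0) space is globally convex (this is the standard local-to-global principle of Bridson--Haefliger). Since a convex subset of a CAT(0) space is itself CAT(0) in the induced metric, and therefore contractible, this at once yields that $\tilde W$ is simply connected; for the one-dimensional walls (type $d$ and the transverse walls) it yields more, namely that they are CAT(0) graphs, that is, trees.

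It therefore suffices to verify local convexity, and since each wall is the preimage in $X_{\bowtie}$ of a compact footprint subcomplex $W\subset V_{\bowtie}$, there are only finitely many local pictures to inspect. In the interior of a face or of an edge the wall is a flat rhombus, a flat strip, or a geodesic arc, so local convexity is immediate. At a vertex $v$ the condition becomes a statement in the spherical link: $\mathrm{Lk}(v,\tilde W)$ must be a $\pi$-convex subset of the CAT(1) complex $\mathrm{Lk}(v,X_{\bowtie})$, meaning that for any two of its points at distance $<\pi$ the geodesic of $\mathrm{Lk}(v,X_{\bowtie})$ joining them stays inside $\mathrm{Lk}(v,\tilde W)$. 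Using the footprints recorded in Figures \ref{fig - walls of type a}--\ref{Fig - transverse walls}, together with the description of the links supplied by Lemma 62 of \cite{rd}, I would run through the finitely many vertex types and check this $\pi$-convexity in each case.

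The main obstacle is the vertices whose link is the incidence graph of the Fano plane, i.e.\ the spherical building of type $A_2$ flagged in Remark \ref{Rem - ramanujan}. There $\mathrm{Lk}(v,X_{\bowtie})$ is a bipartite graph of girth $6$, metrized so that each edge has length $\pi/3$; its apartments are the $6$-cycles of length $2\pi$ and its diameter is $\pi$. Consequently a geodesic of length $<\pi$ is an edge-path of combinatorial length $1$ or $2$, and by the girth hypothesis such a path is unique and admits no shortcut, so $\pi$-convexity of $\mathrm{Lk}(v,\tilde W)$ reduces to the combinatorial assertion that the footprint is closed under these short geodesics. In the cases at hand the footprint meets each such link in a single edge or a short geodesic arc — an acute rhombus corner contributes the angle $\pi/3$, i.e.\ exactly one edge — so the convexity condition holds; I expect the delicate point to be the bookkeeping needed to confirm that, at a vertex through which several faces of the wall pass, the contributed arcs assemble into a convex rather than merely connected subgraph. (The genuinely hard separation statement of Remark \ref{Rem - ramanujan}, that the \emph{complement} of this footprint cuts the Ramanujan link into two large pieces, is a different matter, addressed in Section \ref{S-Sep}.) Assembling these local checks establishes local convexity of every wall, hence global convexity, hence simple connectivity, with the one-dimensional walls being in addition trees.
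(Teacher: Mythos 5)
Your reduction works for the walls of type $d$ and for the transverse walls: those are indeed totally geodesic, and the paper disposes of them in exactly one line on that basis (a convex subset of a CAT(0) space is contractible, and here one even gets trees). The problem is that your key premise --- local convexity --- is \emph{false} for the walls of type $a$, $b$, $c$, which are the whole content of the lemma. These walls are not totally geodesic in $X_{\bowtie}$: they \emph{refract} where they pass through bow ties (Figure \ref{Fig - refraction}) and consequently zigzag (Figure \ref{Fig - zigzag}); Section \ref{S-prop} states this contrast explicitly (``both the walls of type $d$ and the transverse walls are totally geodesic\dots compare to Fig.~\ref{Fig - zigzag}, which describes a portion of a wall of type $x=a,b,c$''). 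Since a complete connected locally convex subspace of a Hadamard space is convex, hence totally geodesic, local convexity must already fail at some vertex. Concretely, at a refraction vertex of a bow tie the two directions in which the wall leaves the vertex are at link-distance strictly less than $\pi$, and the link-geodesic joining them is not contained in the link of the wall; this is precisely the configuration your vertex-by-vertex check would have to exclude, and it cannot. (A further warning sign: at the vertices whose link is the Heawood graph, Section \ref{S-Sep} needs the trace of the wall to disconnect that $3$-regular expander into two pieces, which forces it to be a sizeable edge-cut rather than the ``single edge or short geodesic arc'' your sketch assumes; such a separating subgraph has no reason to be $\pi$-convex.)

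So the argument needs a mechanism that tolerates the refraction points, and this is what the paper's proof supplies. It assumes a minimal-length cycle $C$ in a wall $W$ of type $a,b,c$ and fills it by a minimal-area disk diagram $D$. If $D$ contains no bow tie, $C$ is a closed local geodesic, impossible in a CAT(0) space. Otherwise a bow tie of $D$ propagates into a strip of alternating rhombi and bow ties running across $D$ and meeting $C$ in two bow ties $B_1,B_2$; choosing this strip innermost, the arc of $C$ between $B_1$ and $B_2$ is locally geodesic except at the two refraction vertices, which produces two distinct geodesic segments with the same endpoints (one along the wall, one along the strip) and contradicts uniqueness of CAT(0) geodesics. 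In short: the paper does not avoid the refraction points, it isolates them two at a time and plays them against geodesic uniqueness. Your proposal, as written, would need to be restructured along these lines for types $a$, $b$, $c$; only the final clause about types $d$ and transverse walls can be kept.
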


\begin{proof}
Let $W$ be a wall of type $x=a,b,c$ and assume that we have a cycle $C$ (of minimal length)  inside $W$. Let $D$ be a disk diagram (of minimal area) filling $C$.  If $D$ contains no bow tie, then  we have an immediate contradiction from the fact that $C$ would be locally geodesic. Otherwise, let $B$ be a bow tie inside $D$. It is easy to see from the presentation of $V_{\bowtie}$ that $D$ then contains a strip $S$ of alternating rhombi and bow ties extending $B$ in both direction up to its boundary $C$.  Denote by $B_1$ and $B_2$ the two bow ties lying on $C$ and  corresponding to this strip. We may choose $B$ in such a way that there is a path in $C$ between $B_1$ and $B_2$ which contains no bow tie other than $B_1$ and $B_2$. It follows that the wall is locally geodesic except perhaps for two refraction points (see Figure \ref{Fig - refraction}) at two vertices of $B_1$ and $B_2$. This again contradicts uniqueness of geodesics in CAT(0) spaces.  Thus, $W$ does not contain any non trivial cycle.

The walls of type $d$ and the transverse walls are totally geodesic and thus, simply connected. 
\end{proof}

\begin{figure}[h]
\centering{\includegraphics[width=4cm]{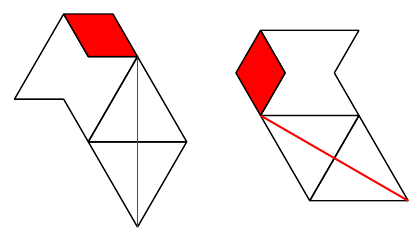}}
\caption{Wall refraction}\label{Fig - refraction}
\end{figure}

\section{The separation}\label{S-Sep}

We show in this section that the wall complements have exactly two connected components.  

\begin{lemma}
For every  wall $W$ of type $x=a,b,c$, the set $X_{\bowtie}\setminus W$ has at least two connected components.
\end{lemma}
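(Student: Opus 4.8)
The plan is to produce a locally constant function $\phi\colon X_{\bowtie}\setminus W\to\mathbb{Z}/2\mathbb{Z}$ that takes both values; its two fibres are then disjoint nonempty open sets, forcing $X_{\bowtie}\setminus W$ to have at least two connected components. Fix a basepoint $x_0\in X_{\bowtie}\setminus W$ chosen adjacent to a \emph{regular} point $p$ of $W$, meaning an interior point of a footprint edge across which $W$ locally looks like a single separating line. Given any $x\in X_{\bowtie}\setminus W$, join it to $x_0$ by a path put in transverse position with respect to $W$ (for instance the CAT(0) geodesic $[x_0,x]$, perturbed off the vertices), and set $\phi(x)$ to be the number of crossings of this path with $W$, reduced modulo $2$. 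A short arc through $p$ connects a point with $\phi=0$ to a point with $\phi=1$, so once $\phi$ is shown to be well defined the lemma follows at once.

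The whole content is the well-definedness of $\phi$, i.e.\ that every loop based at $x_0$ meets $W$ an even number of times modulo $2$. This rests on two facts. First, $X_{\bowtie}$ is CAT(0), hence contractible, so every such loop is null-homotopic; and the mod $2$ intersection number of a loop with a \emph{two-sided} wall is a homotopy invariant, so it vanishes on null-homotopic loops. Second, that $W$ really is two-sided is where the simple connectivity of $W$ (Lemma \ref{L - walls - simply connected}) enters: two-sidedness is a priori only a local statement, and the local sides of $W$ assemble into a double cover of $W$ (the frontier of a regular neighbourhood, fibred over $W$); over a simply connected base this cover is trivial, so the two sides are distinguished \emph{globally} and no side-reversing monodromy can occur along a loop. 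Thus, granting the purely local two-sidedness discussed next, $\phi$ is well defined.

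It remains to verify that $W$ is locally two-sided, which is the geometric heart of the matter. Away from the vertices this is immediate: by inspection of the footprints (Figures \ref{fig - walls of type a}–\ref{fig - walls of type b}) the trace of $W$ on each face it meets is a single arc, so $W$ enters and leaves the face exactly once and cuts it into two half-faces. The delicate points are the vertices lying on $W$, and the crucial ones are those whose link is the incidence graph of the Fano plane (the Heawood graph): there one must check that the trace $\mathrm{lk}(v,W)$ of the wall separates $\mathrm{lk}(v,X_{\bowtie})$ into two pieces. I expect this to be the main obstacle, and it is exactly the ``separating a spherical building'' phenomenon of Remark \ref{Rem - ramanujan}: since the Heawood graph is a good expander it cannot be cut casually, so the separating subgraph has to be read off directly from the explicit footprints defining $W$, vertex by vertex. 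A secondary nuisance is that $W$ is not totally geodesic for $x=a,b,c$ --- it refracts at the acute vertices of the bow ties (Figure \ref{Fig - refraction}) --- but since two-sidedness is a purely local property this affects neither the local link analysis nor the global argument. Once local separation into two sides is confirmed at every vertex, $W$ is two-sided, $\phi$ is well defined, and because $\phi$ attains both values near the regular point $p$ the complement $X_{\bowtie}\setminus W$ has at least two connected components.
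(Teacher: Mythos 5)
Your strategy is sound and, at bottom, it is the same argument as the paper's: everything reduces to the local fact that the trace of $W$ in the link of a vertex --- the incidence graph of the Fano plane --- separates that link into two components. Where you differ is in the global bookkeeping. You package the argument as a mod~$2$ intersection cocycle: local two-sidedness, plus simple connectivity of $W$ to kill side-reversing monodromy, plus contractibility of $X_{\bowtie}$ to make the crossing parity well defined. The paper instead runs a ``last moment of contact'' homotopy: it takes the dual edge $e$ with endpoints $x,y$, supposes a path $\gamma_1$ from $x$ to $y$ misses $W$, homotopes $\gamma_1$ to $e$, and localizes the first failure at a vertex $z\in W$, where the two nearby points $a_t,b_t$ of the link would have to be joined in $L\setminus W$ --- contradicting the halving of the Heawood graph shown in Figure~\ref{Fig - Halving}. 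Your version is more systematic (and does not need to single out the dual edge), at the cost of invoking Lemma~\ref{L - walls - simply connected} for two-sidedness, which the paper reserves for the ``at most two components'' half.

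Two caveats. First, the decisive step --- that the footprint of $W$ in the Heawood graph really does disconnect it, despite that graph being a Ramanujan expander --- is the entire content of the lemma, and you correctly name it as the main obstacle but do not verify it; this is precisely what Figure~\ref{Fig - Halving} supplies, and without it neither your argument nor the paper's closes. Second, your ``two-sided hypersurface'' formalism assumes $W$ has codimension one, but by Definition~\ref{D - Wall a} a wall of type $a$ contains entire (red) rhombi as $2$-cells, so the trace of $W$ on such a face is not ``a single arc'' and the regular-neighbourhood double cover is not literally a double cover there. This is repairable (the complement never accumulates on the interior of a wall-rhombus, and a ``crossing'' through such a rhombus can be read off from which pair of boundary edges the path uses), but as stated your local two-sidedness claim away from vertices is not immediate for those faces.
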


\begin{proof}
We assume that $W$ is of type $a$.  Let $e$ be an edge orthogonal to $W$ with label $a$, and let $x$ and $y$ be the extremities of $e$. We show that $x$ and $y$ belong to two distinct components. If not, there is a piecewise linear path $\gamma_1$ from $x$ to $y$ in $X_{\bowtie}$ which does not intersect $W$. Let $(\gamma_t)_{t\in [0,1]}$ be a piecewise linear homotopy in $X_{\bowtie}$ between $\gamma_1$ and  the edge $e$. There exists a largest $0<t_0<1$ such that $\gamma_t$ intersect $W$ for every $t\leq t_0$. Let $z$ be a point in $\gamma_{t_0}\cap W$. From Definition  \ref{D - Wall a}, it is easy to see that $z$ must be a vertex of $X_{\bowtie}$. In addition, the link $L$ of $z$ is the incidence graph of the Fano plane. The path $\gamma_t$, for $t>t_0$ small enough, provides two points $a_t$ and $b_t$ in $L$, which are connected in $L\setminus W$ for $t>t_0$ but are not connected in $L\setminus W$ for $t=t_0$. The left hand side of Fig. \ref{Fig - Halving} shows that this is a contradiction. So $X_{\bowtie}\setminus W$ has at least two connected components. 

The same proof applies to the two other cases. The right hand side of Fig. \ref{Fig - Halving} deals with walls of type $b$, and the case of walls of type $c$ is symmetric. 
\end{proof}

\begin{figure}[h]
\centering{\includegraphics[width=5cm]{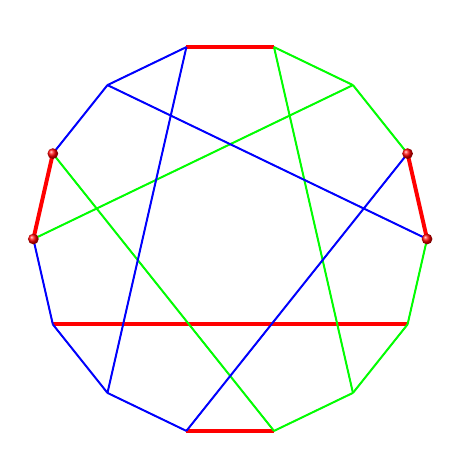}\hspace{1cm}\includegraphics[width=5cm]{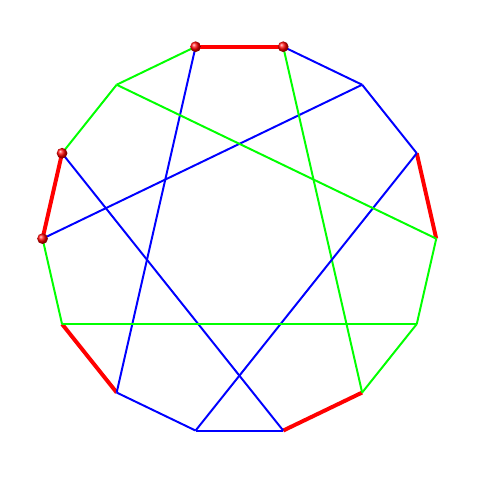}}
\caption{Halving spherical buildings}\label{Fig - Halving} 
\end{figure}

\begin{lemma}
For every  wall $W$ of type $x=a,b,c$, the set $X_{\bowtie}\setminus W$ has at most two connected components.
\end{lemma}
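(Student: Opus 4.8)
The plan is to combine the \emph{local} geometry of $W$ with its \emph{global} simple connectivity (Lemma \ref{L - walls - simply connected}). The crucial local property is that $W$ is \emph{two-sided}: for every $p\in W$ there is a neighbourhood $U$ of $p$ such that $U\setminus W$ has exactly two connected components. Granting this, I would assemble the local sides into a two-sheeted covering of $W$ and trivialise it using simple connectivity, then transfer the conclusion to the complement.

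First I would verify two-sidedness point by point. At a point in the interior of a face crossed by $W$, the footprint is a single segment and $U\setminus W$ is a pair of half-disks, so there are exactly two local components. At an interior point of a (red) edge carried by $W$, the link is a finite union of arcs joined at the two poles along the edge, and one reads off from Figures \ref{fig - walls of type a}--\ref{fig - walls of type b} that the footprint meets this link in a set separating it into exactly two arcs. The only delicate case is a vertex $z$ of $X_{\bowtie}$ lying on $W$: its link $L$ is the incidence graph of the Fano plane, and the link of $W$ at $z$ is a finite set of points $P\subset L$, so two-sidedness amounts to the assertion that $L\setminus P$ has exactly two connected components. This is precisely the halving displayed in Figure \ref{Fig - Halving}: the very picture that yields ``at least two'' in the preceding lemma also shows that no third piece appears.

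With two-sidedness in hand, I would form the set $\widetilde W$ of pairs $(p,\sigma)$, where $p\in W$ and $\sigma$ is a local side of $W$ at $p$, together with its natural projection $\widetilde W\to W$. This projection is a two-sheeted covering, the local sides gluing consistently across faces, edges and vertices precisely because each transition is a bijection between two-element sets of sides. Since $W$ is simply connected by Lemma \ref{L - walls - simply connected}, this covering is trivial, so $\widetilde W$ is the disjoint union of two copies of $W$; in particular $W$ has exactly two global sides.

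Finally I would transfer this to the complement. Each sheet of $\widetilde W$ determines, along all of $W$, a coherent choice of local side, whose union is a connected one-sided collar neighbourhood of $W$ inside $X_{\bowtie}\setminus W$ and hence lies in a single component. Conversely, every connected component $C$ of $X_{\bowtie}\setminus W$ has nonempty frontier contained in $W$ (otherwise $C$ would be open and closed in the connected space $X_{\bowtie}$, contradicting $W\neq\varnothing$), so $C$ contains such a collar, i.e.\ at least one sheet. Assigning to each component the (nonempty) set of sheets it absorbs is therefore an injection into a two-element set, giving at most two components. The main obstacle is the vertex computation in the first step --- establishing that the footprint cuts the Fano incidence graph into \emph{exactly} two pieces, the combinatorial heart flagged in Remark \ref{Rem - ramanujan} --- after which the covering-space argument is formal.
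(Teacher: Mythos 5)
Your strategy (establish that $W$ is locally two-sided, assemble the local sides into a two-sheeted covering $\widetilde W\to W$, trivialise it using Lemma \ref{L - walls - simply connected}, then show every complementary component absorbs at least one sheet) is a genuinely different route from the paper's. The paper instead takes three points in three alleged components, forms the CAT(0) geodesic triangle $\Delta$ they span, and argues that $W\cap\Delta$ would have to contain either a configuration lying inside a single red rhombus or a cycle in $W$, the latter being excluded by the same Lemma \ref{L - walls - simply connected}. Both arguments therefore reduce to simple connectivity of the wall, but through different mechanisms; your reduction of the combinatorics to the single assertion that the footprint cuts the Fano incidence graph into \emph{exactly} two pieces is a clean way of isolating what Figure \ref{Fig - Halving} is being asked to show.

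There is, however, a genuine gap in your first step. By Definition \ref{D - Wall a}, a wall of type $a$ is a subcomplex containing not only red edges but entire red \emph{rhombi}, i.e.\ full $2$-cells of $X_{\bowtie}$ (this is also why the paper's own proof must separately dispose of the degenerate case where $I_x\cup I_y\cup I_z$ is contained in a red rhombus). At an interior point $p$ of such a rhombus every sufficiently small neighbourhood $U$ satisfies $U\subset W$, so $U\setminus W=\varnothing$: there are no local sides at all, not two. Your case analysis covers only faces \emph{crossed} by $W$ (segment footprint), interior points of red edges, and vertices, so the projection $\widetilde W\to W$ is not a two-sheeted covering --- its fibres are empty over the $2$-dimensional part of $W$. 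The repair is not purely formal: one must work with the frontier of $W$ (or a spine) and verify that the two sides visible along the boundary of a wall-rhombus are not identified by travelling around that rhombus, which is again a link computation at its vertices; moreover the frontier is not the object whose simple connectivity Lemma \ref{L - walls - simply connected} establishes, so the trivialisation step would also need to be rerouted. Until the two-sided structure is defined and shown coherent across the rhombi, the covering-space argument does not go through as stated.
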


\begin{proof}
Assume that we can find three point  $x,y,z$ in $X_{\bowtie}$ in three distinct components and let us find a contradiction. Let $\Delta$ be the geodesic triangle with vertices $x,y,z$ and let $I$ denote $W\cap \Delta$. By assumption $x,y$ and $z$ are in three distinct components of $\Delta\setminus  I$, so $I$ contains points $x',y',z'$ in the open segments $(y,z)$, $(x,z)$, and $(x,y)$ respectively, and paths $I_x,I_y,I_z\subset I$ joining $y'\leftrightarrow z'$, $x'\leftrightarrow z'$ and $x'\leftrightarrow y'$ respectively.   Either $I_x\cup I_y\cup I_z$ is included in a (red) rhombus, or they form a cycle in the wall $W$.  The first case is easy to rule out and the last case is prohibited by Lemma \ref{L - walls - simply connected}.  
\end{proof}

\begin{proposition}
For every wall $W$, the set $X_{\bowtie}\setminus W$ has exactly two connected components. 
\end{proposition}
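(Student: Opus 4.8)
The plan is to assemble the Proposition from the two preceding lemmas, which bracket the number of components from below and above, together with the separate treatment of walls of types $d$ and transverse walls. For walls $W$ of type $x=a,b,c$, the two lemmas immediately above show that $X_{\bowtie}\setminus W$ has at least two and at most two connected components, so it has exactly two. The remaining work is to handle the walls of type $d$ and the transverse walls, which were not addressed by the two lemmas (those lemmas are stated only for $x=a,b,c$).

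For the remaining wall types I would argue as follows. By Lemma \ref{L - walls - simply connected} every wall of type $d$ and every transverse wall is totally geodesic in the CAT(0) space $X_{\bowtie}$, being a connected component of a graph of red edges that is locally geodesic. A complete totally geodesic subspace of a CAT(0) space is convex, and nearest-point projection onto it is well-defined and continuous; this is exactly the mechanism that forces the complement to disconnect. First I would verify, using Figures \ref{Fig - walls of type d} and \ref{Fig - transverse walls} together with the local description of the link at the relevant vertices, that $W$ separates locally: pick an edge $e$ transverse to $W$ at a point $z$ and check that in the link of $z$ the two sides of $W$ lie in distinct components of the link minus $W$. This gives the ``at least two'' direction by the same homotopy argument used in the first lemma. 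For the ``at most two'' direction, I would run the geodesic triangle argument of the second lemma verbatim: three points in three distinct components produce a cycle in $W$, contradicting simple connectedness (Lemma \ref{L - walls - simply connected}), unless the intersection pattern is confined to a single face, which the geometry of $d$-walls and transverse walls rules out directly.

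The step I expect to be the main obstacle is the local separation claim for the type $d$ and transverse walls. Unlike the $a,b,c$ walls, whose links are the incidence graph of the Fano plane and whose separation is recorded explicitly in Figure \ref{Fig - Halving}, the walls of type $d$ and the transverse walls pass through vertices of other types (the ones described in Lemma 62 of \cite{rd}), and one must check case by case that the wall still cuts each relevant link into exactly two pieces. The delicate point is that a totally geodesic wall need not a priori be separating if a link happened to stay connected after removing the wall's trace; so the figures must be read carefully to confirm that at every vertex the wall genuinely disconnects the link locally. Once this local check is complete at every vertex type met by $d$-walls and transverse walls, the global ``at least two'' and ``at most two'' arguments go through exactly as in the two lemmas, and the Proposition follows for all five wall types.

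\begin{proof}
For a wall $W$ of type $x=a,b,c$, the two preceding lemmas show that $X_{\bowtie}\setminus W$ has at least two and at most two connected components, hence exactly two.

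It remains to treat the walls of type $d$ and the transverse walls. By Lemma \ref{L - walls - simply connected} such a wall $W$ is totally geodesic, hence convex, in the CAT(0) space $X_{\bowtie}$. The ``at least two'' direction follows as in the first lemma: choosing an edge $e$ transverse to $W$ with extremities $x,y$ and a homotopy contracting a hypothetical connecting path onto $e$, we obtain a vertex $z\in W$ at which a local crossing occurs; inspection of the link of $z$ (using Figures \ref{Fig - walls of type d} and \ref{Fig - transverse walls} and the description of links in Lemma 62 of \cite{rd}) shows that the two local sides of $W$ lie in distinct components of the link minus $W$, a contradiction. The ``at most two'' direction follows as in the second lemma: three points in three distinct components yield segments whose traces on $W$ either lie in a single face, which is excluded by the shape of these walls, or form a nontrivial cycle in $W$, contradicting Lemma \ref{L - walls - simply connected}. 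Thus $X_{\bowtie}\setminus W$ has exactly two connected components for every wall $W$.
\end{proof}
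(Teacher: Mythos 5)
Your proof is correct and follows the same route as the paper, which simply combines the two preceding lemmas for walls of type $a,b,c$ and declares the two remaining cases ``clear'' (on the grounds, stated in the proof of Lemma \ref{L - walls - simply connected}, that walls of type $d$ and transverse walls are totally geodesic). Your elaboration of those remaining cases is a reasonable filling-in of what the paper leaves implicit; the only small slip is that total geodesy is asserted in the \emph{proof} of Lemma \ref{L - walls - simply connected}, not in its statement, so it should be cited as such rather than as a consequence of the lemma.
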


\begin{proof}
The case of walls of type $x=a,b,c$ is treated in the above two lemmas, and the two remaining cases are clear. 
\end{proof}

We say that a wall $W$ separates two points $x$ and $y$ in $X_{\bowtie}$ if $x$ and $y$ do not belong to the same connected component of $X_{\bowtie}\setminus W$.

\begin{lemma}
The number of walls through any cell in $X_{\bowtie}$ is (uniformly) finite.  
\end{lemma}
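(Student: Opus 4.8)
The plan is to exploit the compactness of $V_{\bowtie}$, which is built from only 13 faces and therefore has only finitely many cells up to the action of $\G_{\bowtie}$; it will suffice to bound the number of walls through each of these finitely many orbit types and then take the maximum. The starting point is that every wall is, by definition, a connected component of the universal cover of a \emph{fixed} subcomplex of $V_{\bowtie}$ (the red edges and rhombi of Figures \ref{fig - walls of type a} and \ref{fig - walls of type b} for types $a,b,c$, and the red edges of Figures \ref{Fig - walls of type d} and \ref{Fig - transverse walls} for type $d$ and the transverse walls). Consequently, the footprint that any wall leaves on a face of $X_{\bowtie}$ is a lift of one of the finitely many footprint patterns prescribed by those figures. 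First I would fix a cell $\sigma$ and enumerate the possible footprints a wall of each type can leave on $\sigma$: since each face is one of only three shapes (equilateral triangle, rhombus, bow tie), inspection of the defining figures shows this list is finite, with length bounded independently of $\sigma$.

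Next I would use the fact that two distinct walls are disjoint, being distinct connected components of the defining subcomplexes. Hence their footprints on $\sigma$ are pairwise disjoint subsets of $\sigma$. Since $\sigma$ is compact and each footprint contains a prescribed edge or segment drawn from a finite list of candidate edges and segments in $\sigma$, only finitely many pairwise disjoint footprints can occur simultaneously. This bounds the number of walls meeting $\sigma$ by the number of candidate footprint pieces carried by $\sigma$, which is finite; taking the maximum over the finitely many orbit types of cells produces the claimed uniform bound.

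The hard part will be the bookkeeping at the vertices whose link is the incidence graph of the Fano plane, where several wall types cross at once and where a single wall may refract (Figure \ref{Fig - refraction}). To control the count there I would pass to the link $L$ of such a vertex $z$: a wall through $z$ is recorded by a prescribed configuration of vertices and edges of $L$, and since $L$ is a finite graph, only finitely many such configurations, and hence only finitely many walls, can pass through $z$. The refraction phenomenon does not affect this local count, because within a single cell the footprint of a wall is a simple prescribed pattern that cannot accumulate on itself. Assembling the face, edge, and vertex estimates then gives the lemma.
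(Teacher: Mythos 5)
Your argument is correct and is in essence the paper's own, which simply declares the lemma obvious with the upper bound five (the number of wall types), the implicit justification being exactly your key observation: walls of a given type are pairwise disjoint connected components of the preimage of a fixed subcomplex of $V_{\bowtie}$, so at most one of them can contain any given cell, and the bound is automatically uniform. Your extra bookkeeping in the links of the Fano-plane vertices is harmless but unnecessary for this statement, since two walls of the same type through the same cell would be connected components sharing a point and hence equal.
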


This lemma is obvious --- five  is an upper bound (the number of types of walls).

\begin{proposition}
Given any two points $x,y\in X_{\bowtie}$, the number of walls separating $x$ and $y$ is finite. 
\end{proposition}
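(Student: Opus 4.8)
The plan is to reduce the problem to a local, quantitative count by exploiting the CAT(0) geometry of $X_{\bowtie}$ and the combinatorial structure of the walls established above. Fix two points $x,y \in X_{\bowtie}$ and let $\gamma$ denote the unique CAT(0) geodesic segment joining them. The key observation is that any wall $W$ separating $x$ and $y$ must meet $\gamma$: indeed, $\gamma$ is a path from $x$ to $y$, and if it avoided $W$ entirely then $x$ and $y$ would lie in the same connected component of $X_{\bowtie}\setminus W$, contradicting that $W$ separates them. Thus it suffices to bound the number of walls that intersect the compact geodesic segment $\gamma$.

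The heart of the argument is then a \emph{local finiteness} statement: I would cover $\gamma$ by finitely many cells (faces, edges, and vertices) of $X_{\bowtie}$, and use the preceding lemma that through any given cell there pass at most five walls (one per type). Since $\gamma$ is a compact segment in a locally finite CAT(0) complex with cells of uniformly bounded geometry, it passes through only finitely many cells; call this number $N$. Every wall meeting $\gamma$ must meet at least one of these $N$ cells, and through each such cell there are at most finitely many walls (bounded by the lemma, counting multiplicity over the cells of a fixed face). Combining these two finite bounds yields that only finitely many walls can intersect $\gamma$, hence only finitely many walls separate $x$ and $y$.

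The main obstacle, and the step requiring the most care, is verifying that a separating wall genuinely crosses the geodesic $\gamma$ rather than merely accompanying it, together with controlling the subtlety that walls of type $a,b,c$ \emph{refract} and can run along CAT(0) geodesics (as flagged in Remark \ref{Rem - ramanujan}). A wall that travels parallel to a portion of $\gamma$ could in principle intersect many cells along $\gamma$ without this causing the count to blow up, but one must ensure such behavior does not produce infinitely many \emph{distinct} walls. This is controlled precisely by the local bound: the number of walls meeting any fixed cell is at most five regardless of their refraction behavior, so even a wall that shadows $\gamma$ for a long stretch is still one of the at most $5N$ walls accounted for by the cells of $\gamma$. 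I would therefore phrase the finiteness bound in terms of cells met by $\gamma$ rather than transverse intersection points, which sidesteps the refraction issue entirely.

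In summary, the proof has three moves: (i) reduce to walls meeting the geodesic $\gamma$ from $x$ to $y$, using that separating walls cannot be avoided by any connecting path; (ii) note $\gamma$ meets only finitely many cells by compactness and local finiteness of $X_{\bowtie}$; and (iii) apply the uniform bound of five walls per cell to conclude. The only delicate point is (i), ensuring the separation hypothesis forces an intersection with $\gamma$, which follows directly from the definition of separation and the connectedness of $\gamma$.
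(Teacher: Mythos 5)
Your proposal is correct and follows essentially the same route as the paper: the paper's proof likewise observes that a separating wall must intersect the CAT(0) geodesic from $x$ to $y$ and then invokes the preceding lemma bounding the number of walls through any cell. You merely spell out the compactness/local-finiteness step that the paper leaves implicit.
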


\begin{proof}
Every wall  which separates $x$ from $y$ has to intersect the CAT(0) geodesic from $x$ to $y$. Thus the proposition follows from the previous lemma.  
\end{proof}

\section{Properness}\label{S-prop}

Let $\sharp(x,y)$ denote the number of walls separating $x$ from $y$ in $X_{\bowtie}$. The wall space defined in Section \ref{S-walls} is proper whenever
\[
\sharp(x,sx)\to \infty
\] 
for $s\to \infty$ in $\G_{\bowtie}$.

\begin{figure}[h]
\centering{\includegraphics[width=8cm]{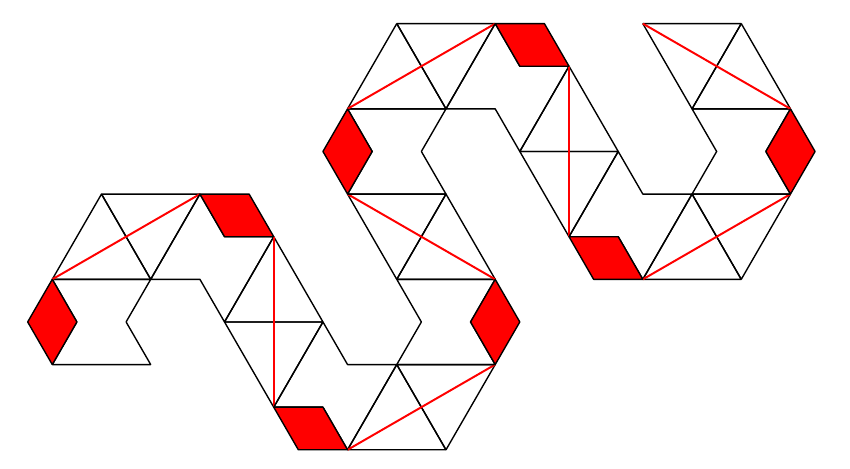}}
\caption{Zigzags in walls}\label{Fig - zigzag}
\end{figure}

Let $d_{\bowtie}(x,y)$ denotes the number of bow ties intersecting the CAT(0)  geodesic $[x,y]$ between $x$ and $y$.
Let $s\to \infty$ in $\G_{\bowtie}$. We distinguish two cases depending on whether $d_{\bowtie}(x,sx)\to \infty$ or not. 
In the first case, we have $\sharp(x,sx)\to \infty$ because every transverse wall intersecting any given bow tie on $[x,y]$  is separating $x$ from $y$. Observe that both the walls of type $d$ and the transverse walls are totally geodesic in $X_{\bowtie}$ (compare to Fig. \ref{Fig - zigzag}, which describes a portion of a wall of type $x=a,b,c$).  

So, we may assume that $d_{\bowtie}(x,sx)$ is bounded above. Since the  length of the geodesic $[x,sx]$ tends to infinity, it follows that an arbitrary long subsegment is contained in a flat plane of equilateral triangles in $X_{\bowtie}$. Thus, properness follows from the following lemma.

\begin{lemma}
If $W$ is a wall of type $x=a,b$ or $c$ and $\Pi$ is a flat plane of equilateral triangles in $X_{\bowtie}$, then $W\cap \Pi$ is  empty or coincide with a straight line in $\Pi$. 
\end{lemma}

\begin{proof}
By definition of $W$, it is clear that $W\cap \Pi$ contains at least a straight line if it is nonempty, and that the intersection consists of parallel straight lines in $\Pi$. Assume that $W\cap \Pi$ contains two such lines $d_1$ and $d_2$.  Using an argument similar to that of Lemma \ref{L - walls - simply connected}, we see that $W$ contain two bow ties $B_1$ and $B_2$ together with two geodesic segments between vertices of $B_1$ and  $B_2$, one inside $W$, and the other one in a strip of bow ties containing $B_1$ and $B_2$. This contradicts the uniqueness of CAT(0) geodesics in $X_{\bowtie}$.  
\end{proof}

Therefore, if $[x,y]$ is a geodesic segment included in a flat plane of equilateral triangles, then $\sharp(x,y)\geq d(x,y)$, where $d$ is the Euclidean distance between $x$ and $y$.  This proves that the wall space is proper.

Theorem \ref{th1} now follows from the combined results in Section \ref{S-Sep}, in Section \ref{S-prop}, and in \cite{HP,Valette-book}.

\section{Additional remarks}\label{add-rem}

(1) The group $\G_{\bowtie}$ belongs to an infinite family $\sF_{\bowtie}$ of groups constructed in a similar way --- using graphs of spaces. The vertex spaces all are isomorphic to the given connected component selected after surgery on the given Euclidean building (see the introduction), and the edge spaces are (nonabelian) free groups. The attaching maps are isometries.  An interesting subfamily is given by gluing $n$ copies of the selected component along a fixed common boundary. It is an easy matter to generalize the idea for constructing walls presented in the present paper to every group in the collection $\sF_{\bowtie}$. 
Every group in $\sF_{\bowtie}$ is built out of 3 shapes (equilateral triangles, and long rhombi and bow ties) and has the Haagerup property.  In fact,   every group in $\sF_{\bowtie}$ admits a proper (isometric) action on a finite dimensional CAT(0) cube complex  (see \cite{S,CN,Nica,HW} for information on group cubulation). The walls that we have described above lead to a CAT(0) cube complex of dimension 4 (indeed, the walls of type $a$, $b$, $c$ and the transverse walls   pairwise intersect each other, but no larger family of walls has this property). Groups in $\sF_{\bowtie}$ provide further examples of groups of intermediate rank in the sense of \cite{rd}. One can explicitly compute their local rank, in the sense of \cite[Definition 4.5]{chambers}, in terms of the underlying graph.

(2) Having the Haagerup property, groups in $\sF_{\bowtie}$ satisfy the Baum--Connes conjecture with coefficients by work of Higson and Kasparov \cite{HK}. On the other hand, the fact that they have a proper action on a finite dimensional CAT(0) cube complex implies that they have the property of rapid decay (see \cite{CR}). The latter property was already established in \cite{rd}, with a different proof (not relying on cube complexes, and which also handles other types of groups of intermediate rank, like the groups of rank $7\over 4$).  Thus, the Baum--Connes conjecture without coefficient was also already known for groups in $\sF_{\bowtie}$.

\end{document}